\title{The Density of Elliptic Dedekind Sums}
\author[N. Berkopec]{Nicolas  Berkopec}
\address{University of New Mexico, Albuquerque NM, USA}
\email{nberkopec@unm.edu}
\author[J. Branch]{Jacob Branch}
\address{Fairmont State University, Fairmont WV, USA}
\email{jbranch@students.fairmontstate.edu}
\author[R. Heikkinen]{Rachel Heikkinen}
\address{Augustana College, Rock Island Il, USA}
\email{rachelheikkinen19@augustana.edu}
\author[C. Nunn]{Caroline Nunn}
\address{University of Wisconsin-Madison, Madison WI, USA}
\email{cnunn@wisc.edu}
\author[T.A. Wong]{Tian An Wong}
\address{University of Michigan, Dearborn MI, USA}
\email{tiananw@umich.edu}
\subjclass[2020]{11F20 (primary), 11A15 (secondary)}
\keywords{Elliptic Dedekind sums, density}
\newtheorem{thrm}{Theorem}
\newtheorem{lem}{Lemma}
\theoremstyle{remark}
\newtheorem{rem}{Remark}
\newcommand{\legendre}[2]{\genfrac(){.1pt}{0}{#1}{#2}}
\newcommand{\pmat}[4]{\begin{pmatrix}#1 & #2\\#3 & #4\end{pmatrix}}
\DeclareMathOperator{\sign}{sign}
\begin{document}

\begin{abstract}
    Elliptic Dedekind sums were introduced by R. Sczech as generalizations  of  classical Dedekind sums to complex lattices. We show that for any lattice with real $j$-invariant, the values of suitably normalized elliptic Dedekind sums are dense in the real numbers. This extends an earlier result of Ito for Euclidean imaginary quadratic rings. Our proof is an adaptation of the recent work of Kohnen, which gives a new proof of the density of values of classical Dedekind sums.
\end{abstract}

\maketitle

\section{Introduction}
\subsection{The density of classical Dedekind sums} 
The classical Dedekind sum $s(m,n)$ is defined for $m,n\in\mathbb{Z}$, $(m,n)=1$, $n\neq 0$, by
\[
s(m,n):=\frac{1}{4n}\sum_{k=1}^{n-1}\cot\left(\pi\frac{mk}{n}\right)\cot\left(\pi\frac{k}{n}\right).
\]
These sums were first studied by Dedekind due to their appearance in the transformation law for the logarithm of the Dedekind $\eta$ function.  Specifically, for integers $a$, $b$, $c$, and $d$ with $c\neq0$ and $ad-bc=1$,
$$\log\eta\Big(\frac{a\tau+b}{c\tau+d}\Big)=\log\eta(\tau)+\frac{1}{2}\log\Big(\frac{c\tau+d}{i\ \sign (c)}\Big)+\frac{\pi i}{12}\phi\begin{pmatrix}a & b\\ c&d \end{pmatrix},$$
where $\phi$ is a map from $SL_2(\mathbb{Z})$ to $\mathbb{Z}$ given by
$$\phi\pmat{a}{b}{c}{d}=\begin{cases}{\frac{b}{d}} &  \text{if }c=0 \\  
\frac{a+d}{c}-12(\sign( c))s(d,|c|))&   \text{if }c\not=0 \end{cases},
 $$
 and $\tau$ is any point in the upper half-plane. The logarithm is taken with respect to the principle branch.  The $\phi$ map is a quasi-homomorphism satisfying
 \[\phi(A_1)=\phi(A_2)+\phi(A_3)-3\sign(c_1c_2c_3),\]
 where $A_1=A_2A_3$ and $c_i$ denotes the bottom left entry of $A_i$.  This near homomorphism law gives rise to many interesting results in the study of Dedekind sums and makes them an important tool in number theory.

It was conjectured by Grosswald and Rademacher that the values of $s(m,n)$ are dense in $\mathbb R$, and moreover the graph $(m/n, s(m,n))$ is dense in $\mathbb R^2$ \cite{radegross}. The latter statement (which implies the former) was proved by Hickerson \cite{hick}. Recently, Kohnen gave a new proof of the former \cite{kohnen}, using a 
result of Girstmair  \cite{girstmair} on the three-term relation of Dedekind sums and Dirichlet's theorem on arithmetic progressions of primes, providing a direct approximation of any rational number.

\subsection{The density of elliptic Dedekind sums} 

In this paper, we use Kohnen's  method to provide an equivalent result for normalized elliptic Dedekind sums for any imaginary quadratic field. 
Let $L$ be a non-degenerate lattice in $\mathbb C$.  We define
$$
E_k(z)=\sum_{\substack{l\in L,\\ l+z\not= 0}} (l+z)^{-k}|l+z|^{-s}\Big|_{s=0},
$$
where the value of the sum at $s=0$ is evaluated by means of analytic continuation. We remark on the similarity between these elliptic functions and the cotangent function used to define classical Dedekind sums. 

Define $\mathcal O_L:=\{m \in \mathbb{C} :mL\subset L\}$.  This set is known as the ring of multipliers for $L$ and is either equal to the integers or to an order in an imaginary quadratic field (see exercise 12.3 of Neukirch \cite{neukirch} for reference).  Then, following Sczech \cite{sczech}, the elliptic Dedekind sums for $L$ are defined as 
$$
D_L(h,k)=\frac{1}{k}\sum_{\mu \in L/kL} E_1\Big(\frac{h\mu}k\Big)E_1\Big(\frac{\mu}k\Big)
$$
for $h,k\in\mathcal O_L$, $k \neq 0$. It is important to note that $D_L(h,k)$ is dependent on the choice of the lattice $L$. 

As with the classical case,  we define a map $\Phi$ from $SL_2(\mathcal{O}_L)$ to $\mathbb{C}$ given by 
$$\Phi\begin{pmatrix} a&b\\ c&d \end{pmatrix} := 
\begin{cases} 
      E_2(0)I\left(\frac{a+d}{c}\right)-D_L(a,c) & c\not = 0 \\
      E_2(0)I\left(\frac{b}{d}\right) & c=0
   \end{cases},
$$
where $I(z) = z - \overline{z}$.  It was shown by Sczech \cite{sczech} that $\Phi$ is a homomorphism in the additive group of complex numbers and that it is trivial for $\mathcal{O}_L=\mathbb{Z},\mathbb{Z}[i],$ and $\mathbb{Z}[\rho]$, where $\rho = (-1+\sqrt{-3})/2$. 

For our purposes it is advantageous to follow Ito \cite{ito} and define normalized elliptic Dedekind sums as
$$
\tilde D_L(a,c)=(i\sqrt{|d_L|}E_2(0))^{-1}D_L(a,c).
$$
Our departure from Ito is in our choice of $d_L$. In particular, any ring of multipliers $\mathcal{O}_L$ is an order of some imaginary quadratic field $K=\mathbb{Q}(\sqrt{-n})$ with a ring of integers having discriminant $d_K$. Recall that the ring of integers is the maximal order in $K$. Hence, we may write $\mathcal{O}_L=\mathbb{Z}[f\frac{d_K+\sqrt{d_K}}{2}]$ (for this result see Neukirch \cite{neukirch}). We therefore take $d_L= f^2d_K$ with a positive integer $f$, called the conductor of $\mathcal O_L$, instead of only using the discriminant $d_K$ of $K$.

Ito \cite{ito} proved that the values of the normalized elliptic Dedekind sums are dense in $\mathbb R$ when $\mathcal{O}_L$ is the ring of integers of $\mathbb{Q}(\sqrt{-2})$, $\mathbb{Q}(\sqrt{-5})$, or $\mathbb{Q}(\sqrt{-7})$.  In fact, for these cases, Ito showed that the set $(a/c, \tilde D_L(a,c))$ is dense in $\mathbb C\times \mathbb R$, using the continued fraction method of Hickerson \cite{hick}. As continued fraction expansions are less well-developed for non-Euclidean rings, especially for $\mathcal{O}_L$ with class number greater than $1$, Ito's method does not immediately generalize. In any case, Ito's result implies that $\tilde{D}(a,c)$ is dense in $\mathbb R$ for the cases listed above.

Using the method introduced by Kohnen \cite{kohnen} for the case of classical Dedekind sums, we provide an alternate proof that the normalized elliptic Dedekind sums are dense in the real line, and we show that the density result holds more generally, for every lattice $L$ with a real j-invariant $j(L)$.

\begin{thrm}
\label{main}
Let $L$ be a lattice with $j$-invariant $j(L)\in\mathbb R$ and multiplier ring $\mathcal O_L=\mathbb{Z}[f\frac{d_K+\sqrt{d_K}}{2}]$ as above, and $\mathcal O_L$ different from $\mathbb{Z},\mathbb{Z}[i]$, and $\mathbb{Z}[\frac{1-\sqrt{-3}}{2}]$. Under these conditions, the values of the normalized elliptic Dedekind sum $\tilde D_L(a,c)$ with respect to $L$ are dense in $\mathbb R$.
\end{thrm}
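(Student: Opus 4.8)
The plan is to follow Kohnen's strategy: use a reciprocity (three‑term) relation for the elliptic Dedekind sums to isolate, inside an explicit expression, a quantity that can be made dense while the remaining Dedekind‑sum terms are forced to vanish. The structural input replacing Girstmair's three‑term relation is that Sczech's map $\Phi$ is a genuine homomorphism (not merely a quasi‑homomorphism, as in the classical case), so that $\Phi(AB)=\Phi(A)+\Phi(B)$ with no defect term; applied to products equal to the identity this yields clean relations among the values of $D_L$. First I would reduce to the case $\bar L=L$: since $j(\bar L)=\overline{j(L)}=j(L)$, the hypothesis $j(L)\in\mathbb R$ forces $\bar L$ to be homothetic to $L$, and $\tilde D_L$ is invariant under homothety of $L$ (both $D_L$ and $E_2(0)$ scale by $\mu^{-2}$ under $L\mapsto\mu L$), so one may assume that $\mathbb C/L$ carries complex conjugation as an anti‑holomorphic symmetry, whence the functional equation $\overline{E_k(z)}=E_k(\bar z)$. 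This is the point at which $j(L)\in\mathbb R$ enters, and it is what makes ``dense in $\mathbb R$'' the correct statement, by forcing the normalized sums to be real; the exclusion of $\mathbb Z[i]$ and $\mathbb Z[\tfrac{1-\sqrt{-3}}2]$ guarantees in addition that $E_2(0)\neq0$, so that the normalization is defined.

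Next I would record the elementary consequences of the homomorphism property. With $S=\pmat 0{-1}10$ one has $\Phi(S)=0$, $\Phi(SA)=\Phi(A)$, and $\Phi(A^{-1})=-\Phi(A)$; these give, for coprime nonzero $a,c\in\mathcal O_L$, the reciprocity law
$$D_L(a,c)+D_L(c,a)=E_2(0)\,I\!\left(\frac ac+\frac ca+\frac1{ac}\right),$$
the inversion symmetry $D_L(a,c)=D_L(a^{-1},c)$ (inverse taken modulo $c$), and — since $E_1$ is odd — the vanishing criterion $D_L(a,c)=0$ whenever $a^2\equiv-1\pmod{c\mathcal O_L}$. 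Dividing the reciprocity law by $i\sqrt{|d_L|}\,E_2(0)$ replaces its right‑hand side by $\tfrac{2}{\sqrt{|d_L|}}\operatorname{Im}\!\big(\tfrac ac+\tfrac ca+\tfrac1{ac}\big)$, which is real; also $D_L$, and hence $\tilde D_L$, is periodic in its first argument modulo $c\mathcal O_L$.

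The main step is the approximation of an arbitrary real $t$. A short congruence computation in $\mathbb Z[\omega_L]$ shows that, for an odd $N$ with $(N,d_L)=1$ such that $-d_L$ is a quadratic residue modulo $N$, there is $c_0\in\mathcal O_L$ whose $\omega_L$–coordinate $y_\ast(N)$ satisfies $y_\ast(N)^2d_L\equiv-4\pmod N$ and for which $c_0^2\equiv-1\pmod{N\mathcal O_L}$; by the vanishing criterion $\tilde D_L(c_0,N)=0$, hence $\tilde D_L(\pi,N)=0$ for every $\pi\equiv c_0\pmod{N\mathcal O_L}$. For such $\pi$ the normalized reciprocity law gives the exact identity
$$\tilde D_L(N,\pi)=\frac yN-\frac{y(N^2+1)}{N|\pi|^2},$$
where $y\equiv y_\ast(N)\pmod N$ is the $\omega_L$–coordinate of $\pi$; letting $|\pi|\to\infty$ along a residue class modulo $N$ with $y$ fixed makes the error term vanish, and as $\pi$ ranges over all of $c_0+N\mathcal O_L$ the coordinate $y$ runs through $y_\ast(N)+N\mathbb Z$, so the values of $\tilde D_L(N,\pi)$ accumulate at every point of $\tfrac{y_\ast(N)}N+\mathbb Z$. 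It therefore remains only to choose $N$, and a sign of the square root, with $\tfrac{y_\ast(N)}N$ within $\varepsilon$ of the fractional part of $t$; adding the appropriate integer then produces a value of $\tilde D_L$ within $2\varepsilon$ of $t$.

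The hard part is this last input: one needs the rescaled square roots $y_\ast(N)/N$ to be dense in $[0,1)$ as $N$ runs over the admissible moduli. This is precisely the equidistribution of roots of quadratic congruences; for $N$ prime it follows from classical bounds for Salié sums, and Dirichlet's theorem furnishes infinitely many admissible primes (those in suitable residue classes modulo $4|d_L|$), so it can be cited from the literature, but it is the genuine analytic content of the argument. A secondary point, needed only if one insists on the theorem for all pairs $(a,c)$ rather than the family constructed above, is to show directly that $j(L)\in\mathbb R$ forces $\tilde D_L(a,c)\in\mathbb R$, equivalently that the image of $\Phi$ lies on the line $\mathbb R\cdot iE_2(0)$; this uses the conjugation symmetry of $L$ more essentially, since otherwise reality is only visible after a descent via the reciprocity law that does not terminate for non‑Euclidean $\mathcal O_L$.
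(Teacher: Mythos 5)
Your overall architecture is sound and genuinely different from the paper's: you use the two-term reciprocity law $D_L(a,c)+D_L(c,a)=E_2(0)I\left(\frac{a}{c}+\frac{c}{a}+\frac{1}{ac}\right)$ together with the vanishing criterion $D_L(a,c)=0$ for $a^2\equiv-1\pmod{c\mathcal O_L}$, whereas the paper works with a three-term relation for a product $A_1=A_2A_3$ with $c_1=c_2$ and $a_1a_2\equiv 1\pmod{c}$, so that two of the three Dedekind sums cancel against each other rather than having to vanish. Your intermediate computations check out: the reciprocity law does follow from $\Phi(SA)=\Phi(A)$, the vanishing criterion follows from $D_L(a,c)=D_L(\bar a,c)$ and oddness of $E_1$, the existence of $c_0\equiv y_*\sqrt{d}/2$ with $c_0^2\equiv -1 \pmod{N\mathcal O_L}$ is equivalent to $y_*^2 d\equiv -4\pmod N$, and the exact formula $\tilde D_L(N,\pi)=\frac{y}{N}-\frac{y(N^2+1)}{N|\pi|^2}$ is correct.

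The genuine gap is the final input, which you yourself flag as ``the genuine analytic content'': you need the fractional parts of $y_*(N)/N$, where $dy_*^2+4\equiv 0\pmod N$, to be dense in $[0,1)$ as $N$ ranges over the admissible moduli. This is the equidistribution of roots of a quadratic congruence (Hooley, Duke--Friedlander--Iwaniec, T\'oth), a far deeper result than anything the theorem should require, and you have not verified that the cited results apply here: the moduli are restricted to residue classes (odd, coprime to $d$, with $-d$ a square), and when $\mathcal O_L$ is a non-maximal order in $\mathbb Q(i)$ the quantity $-d_L=4f^2$ is a perfect square, so $dy^2+4$ is \emph{reducible} and the standard irreducibility hypotheses fail (one would instead need density of $\bar f/N$, a Kloosterman-type statement). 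The paper's three-term construction avoids this entirely and is the key point you are missing: there the parameter playing the role of your $y$ is \emph{prescribed in advance} as $e=(ap-1)/b$ (so that $e/p\to a/b$ automatically), and one only needs the \emph{existence} of some $\ell$ with $d^2e^2+4d\equiv(2\ell-de)^2\pmod p$ --- the value of $\ell$ never needs to be located. That solvability is arranged by choosing $p\equiv 1\pmod{4(4b^2d+d^2)}$ and $p\equiv\bar a\pmod b$ via Dirichlet and the Chinese Remainder Theorem, and verified by quadratic reciprocity. So either supply a proof (or a precisely matching reference, covering restricted moduli and the reducible case) of the density of $y_*(N)/N$, or restructure the approximation step so that the root is chosen first and the modulus second, as the paper does.
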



\noindent Note that if the lattice $L$ is similar to $\mathcal O_L$, then $j(L)$ is real. In particular, this includes all imaginary quadratic ring of integers $\mathcal O_K$. 

We shall first prove in Section \ref{3term} a lemma that provides us with an elliptic Dedekind sum identity analogous to the three term relation of Girstmair.  After proving the lemma, we will be prepared to show that $\tilde D_L(a,c)$ is dense in the real numbers in Section \ref{mainsection}. We will use a subset of the rationals that is dense in the reals with the idea of approximating values in this subset. The theorem then follows as an application of Dirichlet's theorem and quadratic reciprocity.

\section{An application of the three-term relation}

\label{3term}
We first prove a lemma that establishes a simple formula for certain values of elliptic Dedekind sums.  This lemma makes use of the homomorphism property of $\Phi$, and follows the method of Girstmair \cite{girstmair} and Kohnen \cite{kohnen}.

\begin{lem}\label{onetermrel} Let
$A_i:= \begin{pmatrix}
a_i & b_i\\
c_i & d_i
\end{pmatrix} \in SL_2(\mathcal{O}_L)$ for $ i\in\{1,2,3\}$.  If $A_1=A_2A_3$ with the conditions $c_1=c_2=c\not=0$ and $a_1a_2\equiv 1 \ \pmod c$ satisfied, then
$$D_L(a_3,c_3)=E_2(0)I\left(\frac{2}{c_3}+\frac{c_3}{c^2}\right).$$
\end{lem}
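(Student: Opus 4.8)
The plan is to exploit the homomorphism property of $\Phi$ together with the cocycle-type identity it satisfies, exactly as Girstmair and Kohnen do in the classical setting. Since $A_1 = A_2 A_3$ in $SL_2(\mathcal{O}_L)$ and $\Phi$ is an honest homomorphism into $(\mathbb{C},+)$ (this is Sczech's result quoted in the excerpt), we have $\Phi(A_1) = \Phi(A_2) + \Phi(A_3)$. The strategy is to write out each of the three terms using the case definition of $\Phi$ and solve for $D_L(a_3,c_3)$.

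First I would record the entries. From $A_1 = A_2 A_3$ we get $c_1 = c_2 a_3 + d_2 c_3$, and reading off the bottom-right entry gives $d_1 = c_2 b_3 + d_2 d_3$. The hypotheses $c_1 = c_2 = c \neq 0$ then force $c = c a_3 + d_2 c_3$, i.e. $c(1 - a_3) = d_2 c_3$. Also $a_1 = a_2 a_3 + b_2 c_3$, so the congruence condition $a_1 a_2 \equiv 1 \pmod{c}$ becomes $a_2(a_2 a_3 + b_2 c_3) \equiv 1 \pmod c$; combined with $\det A_2 = a_2 d_2 - b_2 c = 1$ this pins down the residues of $a_3$, $d_2$, and related quantities modulo $c$. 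The key arithmetic fact I expect to need is that $a_1 \equiv d_2^{-1} \pmod c$ (from $\det A_1 = 1$, $a_1 d_1 \equiv 1 \pmod c$, and $d_1 \equiv d_2 d_3 \pmod c$ plus $d_3 \equiv d_2^{-1}$... these all follow from the determinant relations) so that the congruence $a_1 a_2 \equiv 1$ is equivalent to $a_2 \equiv d_2 \pmod c$. This should let me identify $a_1 + d_1 \equiv a_2 + d_2 \pmod{c}$ up to controllable terms, so that the $E_2(0) I\!\left(\frac{a+d}{c}\right)$ pieces for $A_1$ and $A_2$ combine cleanly.

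Next I would plug into $\Phi(A_1) = \Phi(A_2) + \Phi(A_3)$. Using $c_1 = c_2 = c \neq 0$, the left side is $E_2(0) I\!\left(\frac{a_1+d_1}{c}\right) - D_L(a_1,c)$ and the first term on the right is $E_2(0) I\!\left(\frac{a_2+d_2}{c}\right) - D_L(a_2,c)$. The crucial additional input is that $D_L(a_1,c) = D_L(a_2,c)$: this should hold because $a_1 \equiv a_2^{-1} \pmod c$ (that's precisely what $a_1 a_2 \equiv 1 \pmod c$ says) together with the reciprocity/symmetry properties of elliptic Dedekind sums — namely $D_L(h,k)$ depends only on $h \bmod k$ and satisfies $D_L(h,k) = D_L(h',k)$ when $h h' \equiv 1$, analogous to $s(h,k) = s(h^{-1},k)$. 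I would either cite this from Sczech or derive it from the definition via the substitution $\mu \mapsto h\mu$ in the sum. Granting that, the $D_L(a_1,c)$ and $D_L(a_2,c)$ terms cancel, and I am left with an identity expressing $D_L(a_3,c_3)$ (which appears inside $\Phi(A_3)$, and here I must check $c_3 \neq 0$ — if $c_3 = 0$ then $c(1-a_3) = 0$ forces $a_3 = 1$, $d_3 = \pm 1$, a degenerate case to handle separately or exclude) in terms of $E_2(0)$ times a difference of $I$-values. The remaining work is to simplify $I\!\left(\frac{a_1+d_1}{c}\right) - I\!\left(\frac{a_2+d_2}{c}\right) + E_2(0) I\!\left(\frac{a_3+d_3}{c_3}\right)$-type expression down to $E_2(0) I\!\left(\frac{2}{c_3} + \frac{c_3}{c^2}\right)$ using linearity of $I$ and the entry relations; in particular I expect $\frac{a_1+d_1}{c} - \frac{a_2+d_2}{c} + \frac{a_3+d_3}{c_3}$ (or the appropriate combination) to collapse to $\frac{2}{c_3} + \frac{c_3}{c^2}$ after substituting $d_2 = \frac{c(1-a_3)}{c_3}$, $a_3 \equiv d_2^{-1}$, etc. Since $I$ is $\mathbb{R}$-linear and $I(z) = 0$ for real $z$, many contributions (those that turn out to be rational, hence real) will drop out, which is what makes the final answer so clean.

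The main obstacle, I expect, is the bookkeeping in the second and third steps: correctly extracting all the congruence and determinant relations among the nine entries $a_i, b_i, c_i, d_i$ and verifying that $D_L(a_1,c) = D_L(a_2,c)$ with the right normalization and residue conventions. The homomorphism property does all the conceptual heavy lifting, so no deep new idea is needed — it is a matter of carefully tracking the entries of the matrix product, invoking the correct elementary properties of $D_L$ (periodicity in the first argument mod the second, and invariance under inversion mod $c$), and then doing the linear-algebra simplification of the $I$-terms, keeping in mind that $I$ kills real numbers so only the genuinely complex parts survive.
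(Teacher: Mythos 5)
Your proposal follows essentially the same route as the paper: apply the homomorphism property of $\Phi$ to $A_1=A_2A_3$, cancel the terms $D_L(a_1,c)=D_L(a_2,c)$ via the substitution $\mu\mapsto a_1\mu$ in the defining sum (using $a_1a_2\equiv 1\pmod c$), and reduce the remaining combination $\frac{a_2+d_2}{c}+\frac{a_3+d_3}{c_3}-\frac{a_1+d_1}{c}$ using the entries of $A_3=A_2^{-1}A_1$ and the determinant relations. One small correction to your heuristic: the collapse to $\frac{2}{c_3}+\frac{c_3}{c^2}$ is an exact algebraic identity (the numerator reduces to $(a_2-a_1)^2+2$ with $c_3=(a_2-a_1)c$), not a consequence of $I$ killing real contributions --- indeed $\frac{2}{c_3}+\frac{c_3}{c^2}$ need not be real.
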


\begin{proof} 
 Let $A_i$ be as above.  Since $A_i\in SL_2(\mathcal{O}_L)$, the following equations must hold
 \begin{align}
a_2d_2-b_2c=1, \label{eq1}\\
a_1d_1-b_1c=1. \label{eq2}
\end{align}
In Section 1.2 of \cite{sczech}, Sczech states that $\Phi$, as defined above, is a homomorphism from the multiplicative group $SL_2(\mathcal{O}_L)$ to the additive group of complex numbers. Therefore, 
$$
0=\Phi(\text{Id})=\Phi(A_1^{-1}A_2A_3)=-\Phi(A_1)+\Phi(A_2)+\Phi(A_3).
$$
After expanding and reordering terms as needed, we arrive at the three term relation
\[
D_L(a_1,c)=D_L(a_2,c)+D_L(a_3,c_3)+E_2(0)I\left(\frac{a_1+d_1}{c}-\frac{a_2+d_2}{c}-\frac{a_3+d_3}{c_3}\right).
\]
Fix $c_1=c_2$  and $a_1a_2\equiv 1 \ (c)$.  We claim that $D_L(a_1,c_1)=D_L(a_2,c_2)$. This follows from
\[D_L(a_1,c)=\frac{1}{c}\sum_{k\in L/cL}E_1\left(\frac{a_1k}{c}\right)E_1\left(\frac{k}{c}\right).\]
With the change of variables $k'=a_1k$, this becomes
\[
\frac{1}{c}\sum_{k'\in L/cL}E_1\left(\frac{k'}{c}\right)E_1\left(\frac{a_2k'}{c}\right)=D_L(a_2,c).
\]
Since $D_L(a_1,c)=D_L(a_2,c)$, we can conclude that
\[
D_L(a_3,c_3)=E_2(0)I\left(\frac{a_2+d_2}{c}+\frac{a_3+d_3}{c_3}-\frac{a_1+d_1}{c}\right).
\]
Now note that by assumption $A_3=A_2^{-1}A_1$, so we may write 
$$A_3=\begin{pmatrix}
a_1d_2-b_2c & d_2b_1-d_1b_2\\
a_2c-a_1c & d_1a_2-b_1c
\end{pmatrix}.
$$
Hence,
\begin{align*}
&\frac{a_2+d_2}{c}+\frac{a_3+d_3}{c_3}-\frac{a_1+d_1}{c}\\ &=\frac{(a_2-a_1)(a_2+d_2)}{(a_2-a_1)c}+\frac{a_1d_2-b_2c +d_1a_2-b_1c}{(a_2-a_1)c} -\frac{(a_2-a_1)(a_1+d_1)}{(a_2-a_1)c},
\end{align*}
which by \eqref{eq1} and \eqref{eq2} reduces to
$$\frac{2+a_2^2-2a_1a_2+a_1^2}{(a_2-a_1)c}=\frac{2}{c_3}+\frac{c_3}{c^2}.$$ 
Therefore, by substitution we arrive at
\[
D_L(a_3,c_3)=E_2(0)I\left(\frac{2}{c_3}+\frac{c_3}{c^2}\right) 
\]
as desired.
\end{proof}

\section{Proof of Theorem \ref{main}} 

\label{mainsection}

We first explain the choice of the normalization of $\tilde D_L(h,k)$ that we employ. It is worth noting that all values taken by normalized elliptic Dedekind sums over imaginary quadratic fields are real, provided $j(L) \in \mathbb{R}$, and in fact are real algebraic. Ito \cite{ito} states that normalized elliptic Dedekind sums over imaginary quadratic fields take values in $\mathbb{Q}(j)$, where $j$ is equal to the $j$-invariant of the lattice $L$. Since we assumed $j$ to be real in the statement of Theorem \ref{main}, $\mathbb Q(j)$ is a subset of the real numbers.  In the important case where $L$ is similar to its own multiplier ring $O_L$ in an imaginary quadratic field, we can easily see that $j(L)$ is real. Recall that $j$ is defined by
\[j(\tau)=1728\frac{g_2(\tau)^3}{\Delta(\tau)},\]
where 
\[
\Delta(\tau)=g_2(\tau)^3-27g_3(\tau)^2
\]
and $g_2,g_3$ are Eisenstein series. The $q=e^{2\pi i \tau}$ expansions of these series have purely real coefficients. Since \[\tau=f\frac{d_K+\sqrt{d_K}}{2},\] we see that $q=e^{2\pi i \tau}$ is real if and only if $fd_K$ is an integer. The last condition is true by assumptions on $f$ and $d_K$. Therefore, $j(\tau)$ and the normalized elliptic Dedekind sums are also real. 

One can show more generally that $j(L)$ is real if and only if the similarity class of $L$ is invariant under complex conjugation. Since complex conjugation acts on ideal classes by taking an ideal class into its inverse, the condition $j(L) \in \mathbb{R}$ means that the order of the ideal class represented by the lattice $L$ is at most 2 in the class group of $O_L$.

With this, we are prepared to prove density. In the following proof we recall the convention $d:=d_L= f^2d_K$ introduced above.

\begin{proof}[Proof of Theorem \ref{main}]
Since the rationals are dense in the reals, it suffices to show that range of $\tilde D_L$ is dense in $\{\frac{a}{b}\in\mathbb{Q}:(b,2d)=1\}$, which is a dense subset of the rationals.  Let $x=\frac{a}{b}$ be a rational number written in lowest terms with $(b,2d)=1$. Let $\bar a$ be the inverse of $a\pmod{b}$.  
The assumptions on $a$ and $b$ allow us to find an infinite sequences of primes $p$ satisfying the requirement that  $d^2e^2+4d$ be a square mod $p$, where $e:=(ap-1)/b.$

 Indeed, since $(b,2d)=1$, $b$ is odd and shares no common factors with $d$.  So $(b,4(4b^2d+d^2))=(b,4b^2d+d^2)=(b,d)=1$.  By Dirichlet's theorem on primes in arithmetic progressions and the Chinese Remainder Theorem, there exist infinitely many primes $p$ such that
\[
p\equiv1\pmod{4(4b^2d+d^2)},\qquad p\equiv\bar{a}\pmod{b},
\]
where $\bar a $ is the inverse of $a$ mod $b$. The first congruence condition guarantees that $p\equiv1$ (mod $4$) and $p\equiv1$ (mod $4b^2d+d^2$).  As above, define $e:=(ap-1)/b$. This is an integer by the second congruence condition.  Then 
\begin{align*}
\legendre{d^2e^2+4d}{p}&=\legendre{d^2+4b^2d}{p}\\
  &=\legendre{p}{d^2+4b^2d}\\
  &=\legendre{1}{d^2+4b^2d}\\
  &=1.
\end{align*}
Therefore, $d^2e^2+4d$ is a square (mod $p$).  So there exists an $\ell$ with $(\ell, p) = 1$ such that
\[
d^2e^2+4d\equiv(2\ell-de)^2\pmod{p},
\]
which simplifies to
\[
(1+\ell e)d\equiv\ell^2\pmod{p}.
\]
Let $k$ be the inverse of $\ell \pmod{p}$. Then $k(k+e)d\equiv 1\pmod{p}$.
We set 
\[
a_1:=k\sqrt{d}\quad \text{and}\quad a_2:=(k+e)\sqrt{d}.
\]
Then $(a_i,p)=1$, and 
\[
a_1a_2=k(k+e)d\equiv1 \pmod{p}.
\]
Since $(p,k)=(p,d)=1$, we have $(p,kd)=1$.  Therefore, there exist integers $x_1,y_1$ such that $px_1+kdy_1=1$, and thus 
\[
px_1+a_1(y_1\sqrt d)=1.
\]
Similarly, there exist integers $x_2,y_2$ such that $px_2+a_2y_2\sqrt{d}=1$.  Let 
\[A_1=\begin{pmatrix}
a_1 & -x_1\\
p & y_1\sqrt d
\end{pmatrix}, \qquad A_2=\begin{pmatrix}
a_2 & -x_2\\
p & y_2\sqrt d
\end{pmatrix},
\]
and
\[
A_3=\begin{pmatrix}a_3 & b_3\\
c_3 & d_3\end{pmatrix}=A_2^{-1}A_1.
\]
By construction, each of these matrices are in $SL_2(\mathcal O_L)$.  So we may apply Lemma \ref{onetermrel} to find
\begin{align*}
D_L(a_3,c_3)&=E_2(0)I\left(\frac{2}{pe\sqrt{d}}+\frac{e\sqrt d}{p}\right)\\
  &=E_2(0)2\frac{\sqrt d}{d}\left(\frac{2}{ep}+\frac{e}{p}\right).
\end{align*}
Thus,
\[
\tilde D_L({a_3,c_3})=\frac{2}{ep}+\frac{e}{p},
\]
and by substituting $e=(ap-1)/{b}$, we have
\[
\frac{2b}{p(ap-1)}+\frac{ap-1}{bp}.
\]
Taking the limit as $p\rightarrow\infty$ and using the fact that
\[
E_2(0)I\left(x\sqrt{d}\right)=2E_2(0)x\sqrt{d},
\]
we have 
\[
\lim_{p\to\infty}\tilde D_L(1-kd,pe\sqrt{d})=2x. 
\]
Therefore, since $2x$ is a rational number, we can conclude that the Dedekind sums are dense in the rational numbers. This concludes the proof. 
\end{proof}


Before concluding, it is of interest to mention that while our result extends Ito's in a certain sense, it remains open whether the graph $(a/c, \tilde D_L(a/c))$ is dense in $\mathbb C\times \mathbb R$ for general $L$. Indeed, in \cite{kohnen}, Kohnen also poses the question of whether his method can be used with classical Dedekind sums to prove the density statement for the graph. 

\begin{rem}
We are grateful to the anonymous referee for providing the following insightful comments.

\begin{enumerate}
\item
There is an alternative proof for Theorem 1 using properties of the group homomorphism $\Phi$ established in \cite{sczech}. It was shown in that paper that the set of values of $\Phi$ forms a subgroup of the additive group of complex numbers which has rank greater or equal to $h = h(\mathcal{O}_L)$ when viewed as a $\mathbb Z$-module. So when the normalized values are all real, then the values can not be all rational in the case $h > 1$. Some of these values must be irrational if $h > 1$. From this it follows easily that some of the normalized Dedekind sums must be irrational as well since the other term contributing to $\Phi(a,b,c,d)$ is rational (after normalization). From this it follows that the values of the normalized Dedekind sums are dense on the real line in the case of a lattice $L$ with a real $j$-invariant $j(L)$ and $h(\mathcal{O}_L) > 1$. A variation of this argument also shows the density of the normalized values of the Dedekind sums in the excluded case $h=1$. The key observation here is that the values $I((a+d)/c)/\sqrt{d}$ are dense in the field of rational numbers.\\

\item
The density result also has a geometric interpretation. If $\mathcal{O}_L$ is fixed, then one can chose a lattice $L$ in each similarity class of lattices with that fixed multiplier ring $O_L$. This gives a set of h homomorphisms $\Phi_L$. The set of these $h$ homomorphisms take values in $\mathbb{C}^h$. In fact, the set of values in $\mathbb{C}^h$ forms a discrete lattice. The situation is analogous to the case of the ring of algebraic integers in a real quadratic field. These integers are dense on the real line once a specific real embedding of the real quadratic field was chosen. However, if both embeddings are considered simultanously, the integers form a lattice in $\mathbb{R}^2$.
\end{enumerate}
\end{rem}

\subsubsection*{Acknowledgments} 
The authors are grateful to the anonymous referee for helpful feedback improving the paper. The authors thank Pierre Charollois and Lawrence Washington for comments on an earlier version of this article.
This research was completed at the REU Site: Mathematical Analysis and Applications at the University of Michigan-Dearborn, and was supported by the grants NSF DMS-1950102 and NSA H98230-21.
\nocite{*}
\bibliographystyle{plain}
\bibliography{sources}
\end{document}